\documentclass[12pt]{article}
\usepackage{amssymb}
\usepackage{amsmath}
\usepackage{amsthm}
\usepackage{fixltx2e}
\usepackage{color}
\usepackage{mnsymbol}

\textwidth 165mm
\textheight 210mm
\topmargin 5mm
\oddsidemargin 0mm
\evensidemargin -5mm

\newtheorem{thm}{Theorem}
\newtheorem{cor}[thm]{Corollary}
\newenvironment{rem}[1][Remark:]{\begin{trivlist}
\item[\hskip \labelsep {\bfseries #1}]}{\end{trivlist}}
\newtheorem{prop}[thm]{Proposition}

\theoremstyle{definition}
\newtheorem{defi}[thm]{Definition}

\numberwithin{equation}{section}
\numberwithin{thm}{section}


\newcommand{\rn}[1]{{\mathbb{R}^{#1}}}

\newcommand{\ep}{\epsilon}

\newcommand{\la}{\langle}
\newcommand{\ra}{\rangle}
\newcommand{\lla}{\llangle}
\newcommand{\rra}{\rrangle}

\newcommand{\gr}{\nabla}
\newcommand{\cd}{\cdot}
\newcommand{\dt}{\partial_t}

\newcommand{\alg}{\mathfrak{g}}
\newcommand{\dalg}{\mathfrak{g}^*}

\newcommand{\re}{\mathrm{Re}}
\newcommand{\im}{\mathrm{Im}}


\newcommand{\mad}{\mathrm{\mathbf{M}}}									

\newcommand{\xip}{\xi_\Psi}

\newcommand{\pbfl}[2]{\left\{ {#1},{#2} \right\}_\mathrm{CF}}		
\newcommand{\pbnls}[2]{\left\{ {#1},{#2} \right\}_\mathrm{NLS}}		

\newcommand{\fl}{\mathrm{CF}}
\newcommand{\nls}{\mathrm{NLS}}

\newcommand{\dfm}{\dfrac{\delta F}{\delta \mu}}	
\newcommand{\dgm}{\dfrac{\delta G}{\delta \mu}}
\newcommand{\dfr}{\dfrac{\delta F}{\delta \rho}}
\newcommand{\dgr}{\dfrac{\delta G}{\delta \rho}}

\newcommand{\psib}{\bar{\psi}}
\newcommand{\phib}{\bar{\phi}}

\newcommand{\salg}{\mathfrak{s}}
\newcommand{\dsalg}{\mathfrak{s}^*}

\newcommand{\bfj}{\mathrm{\mathbf{J}}}

\newcommand{\Apsi}{\mathcal{A}_\Psi}
\newcommand{\Asd}{\mathcal{A}_{\mathfrak{s}^*}}

\begin{document}

\title{The Madelung Transform as a Momentum Map}
\author{Daniel Fusca\footnote{Department of Mathematics, University of Toronto, Toronto, ON M5S 2E4, Canada; e-mail:
dfusca@math.toronto.edu}}
\date{}

\maketitle

\begin{abstract}
The Madelung transform relates the non-linear Schr\"{o}dinger equation and a compressible Euler equation known as the quantum hydrodynamical system. We prove that the Madelung transform is a momentum map associated with an action of the semidirect product group $\mathrm{Diff}(\mathbb{R}^{n})\ltimes H^\infty(\mathbb{R}^{n})$, which is the configuration space of compressible fluids, on the space $\Psi = H^\infty(\mathbb{R}^{n}, \mathbb{C})$ of wave functions. In particular, this implies that the Madelung transform is a Poisson map taking the natural Poisson bracket on $\Psi$ to the compressible fluid Poisson bracket. Moreover, the Madelung transform provides an example of ``Clebsch variables'' for the hydrodynamical system.
\end{abstract}

\begin{section}{Introduction}

The Madelung transform was introduced in 1927, soon after the birth of quantum mechanics, as a way to relate Schr\"{o}dinger-type equations to hydrodynamical equations \cite{Madelung1927}.  It turns out that the Madelung transform not only maps one equation to the other, but it also preserves the Hamiltonian properties of both equations. Namely, the non-linear Schr\"{o}dinger equation is Hamiltonian with respect to the constant Poisson structure on the space of wave functions, which are complex valued fast decaying smooth functions on $\rn{n}$. On the other hand, the compressible Euler equation is Hamiltonian with respect to the natural Lie-Poisson structure on the space of pairs consisting of fluid momenta $\mu$ and fluid densities $\rho$. This space is the dual of the Lie algebra of the semidirect product group of the group of diffeomorphisms of $\rn{n}$ times the space of real-valued fast decaying functions, which is the configuration space of a compressible fluid.  In this paper we show that the Madelung transform sends one Poisson structure to the other. Moreover, the transform is a momentum map associated with a natural action of this semidirect product group on the space of wave functions.

Let complex valued functions $\psi \in \Psi = H^\infty(\rn{n};\mathbb{C}) := \cap_{k\geq 0} H^k(\rn{n};\mathbb{C})$ evolve according to the non-linear Schr\"{o}dinger equation
\begin{equation}\label{eq:nls} 
	\dt \psi = \frac{i}{2}\left(\Delta \psi - 2f\left(| \psi |^2 \right)\psi \right).  
	\tag{NLS}
\end{equation}
Here $f:\rn{} \to \rn{}$ is a smooth function which characterizes the type of non-linear Schr\"{o}dinger equation being considered. For example, the Gross-Pitaevskii equation corresponds to $f(r)=r-1$. Depending on the type of non-linearity, one may consider $\psi$ belonging to an appropriate function space other than $H^\infty(\rn{n};\mathbb{C})$, but for simplicity this will not be discussed in the present paper, see \cite{Carles2012} for details.

The Madelung transform is a map between the NLS-type and hydrodynamical equations. It takes a nonvanishing complex valued function $\psi$ to the pair of real valued functions $(\tau, \rho)$ defined by $\psi = \sqrt{\rho}e^{i\tau}$.
Such a substitution for $\psi$ sends \eqref{eq:nls} into a system of equations for the functions $\rho$ and $v := \gr \tau$ known as the ``quantum hydrodynamical system''
\begin{equation}\label{eq:qhd1}
\left\{
  \begin{array}{l}
   \dt \rho = -\gr \cd (\rho v) \\
   \dt v = -v \cd \gr v - \gr \left( f (\rho) - \dfrac{\Delta \left(\sqrt{\rho}\right)}{2\sqrt{\rho}} \right)
  \end{array} \right. . 
  \tag{QHDa}
\end{equation}
The first equation of this system is the continuity equation for a density $\rho$ moved by a flow with velocity $v$. The second equation would be the classical Euler equation of a barotropic fluid except for the fact that the ``quantum pressure'' $\mathcal{P}(\rho) := \frac{\Delta\sqrt{\rho}}{2\sqrt{\rho}}$ depends on both $\rho$ and its derivatives rather than just on $\rho$ itself.

This system can be written in terms of the momentum, which is the 1-form $\mu = \rho v^\flat$ defined with respect to the Euclidean metric on $\rn{n}$. (Throughout this paper $v^\flat$ is identified with $v$, so we write $\mu = \rho v$ as well.) Assuming $\rho$ is always positive, the system \eqref{eq:qhd1} is equivalent to the following:
\begin{equation}\label{eq:qhd}
\left\{
  \begin{array}{l}
   \dt \rho = -\gr \cd \mu \\
   \dt \mu = -\gr \cd \left( \dfrac{1}{\rho} \mu \otimes \mu \right) - \rho \gr \left( f (\rho) - \dfrac{\Delta \left(\sqrt{\rho}\right)}{2\sqrt{\rho}} \right)
  \end{array} \right. . 
  \tag{QHD}
\end{equation}
The term $\gr \cd \left( \frac{1}{\rho} \mu \otimes \mu \right)$, in components, is given by
\[
	\left( \gr \cd \left( \frac{1}{\rho} \mu \otimes \mu\right) \right)_j = \sum_i \partial_i (\frac{1}{\rho}\mu_i \mu_j).
\]

Note that $\rho$ and $\mu$ are natural coordinates on the dual of the Lie algebra of the semidirect Lie group $S = \mathrm{Diff}(\mathbb{R}^{n})\ltimes H^\infty(\mathbb{R}^{n})$ which is the configuration space of the compressible fluid. Here $\mathrm{Diff}(\mathbb{R}^{n})$ stands for the group of diffeomorphisms that asymptotically approach the identity map at infinity. 

\begin{rem}
	While we formulate and prove the main result in the setting of $\rn{n}$, the definitions and all proofs can be extended to an arbitrary manifold with volume form $d\mathrm{Vol}$ by replacing all gradients with exterior differentiation and by defining the divergence of a vector field in terms of $d\mathrm{Vol}$. Note however that one needs a metric structure to define the equations \eqref{eq:nls} and \eqref{eq:qhd} on manifolds.
\end{rem}

We start the paper with a review of the relevant geometric structures associated with \eqref{eq:nls} and \eqref{eq:qhd} to
establish the context  for the Madelung transform. The Hamiltonian structures of these systems are reviewed in 
Section~\ref{sec:hamstruc}, while Section~\ref{sec:groupaction} defines the action of the semidirect product group $S$ on the space of wave functions $\Psi$. The proof of the main result, that the Madelung transform is a momentum map for this action, is given in Section~\ref{sec:madelung}, Theorem~\ref{thm:madmoment}. For more details on applications of the Madelung transform we refer to \cite{Carles2012}.

\end{section}

\begin{section}{Geometric preliminaries}
\label{sec:geometry}

\begin{subsection}{Hamiltonian structures of non-linear Schr\"{o}dinger and the quantum hydrodynamical system}\label{sec:hamstruc}

Both \eqref{eq:nls} and \eqref{eq:qhd} are Hamiltonian systems with respect to the following Hamiltonians and Poisson structures. Note that these Poisson structures are only defined a subclasses $\Apsi \subset C^\infty(\Psi)$ and $\Asd \subset C^\infty(\dsalg)$ of smooth functionals on $\Psi$ and $\dsalg$. As always with Poisson brackets on infinite-dimensional spaces, the definition of the Poisson algebra of functionals is a subtle question. For instance, functionals on a space of functions $u$ defined as integrals of polynomials (in $u$ and finitely many derivatives of $u$) are closed under the Poisson bracket but not under multiplication.  The same is true of functionals having smooth $L^2$ gradients: they are closed under the Poisson bracket but not under multiplication, cf. \cite{kolev2007}.

\vspace{2 mm}

We start with \eqref{eq:nls} and consider the space $\Psi = H^\infty(\rn{n};\mathbb{C})$ of complex valued functions $\psi$. The real Hermitian inner product on $\Psi$ is defined by $\la f, g \ra := \re \int \bar{f}g \, dx$, and the gradient $\gr$ is defined with respect to this inner product. The Poisson bracket on $\Psi$ is given by
\begin{equation*}\label{eq:pbnls}
	\pbnls{F}{G}(\psi) = \left \la \gr F ,-\tfrac{i}{2} \gr G \right \ra .
\end{equation*}
The Hamiltonian associated with \eqref{eq:nls} is
\begin{equation*}\label{eq:hamnls}
	H_\nls = \int_\rn{n} \frac{1}{2} | \gr \psi |^2 + U(|\psi|^2) dx,
\end{equation*}
where $U:\rn{}\to\rn{}$ is a function satisfying $U'=f$. One finds that the Hamiltonian vector field $X_{H_\nls}$ associated with this Hamiltonian functional and Poisson bracket $\pbnls{\cd}{\cd}$ is given by 
\[
	X_{H_\nls} = -\frac{i}{2} \gr H = \frac{i}{2}(\Delta \psi - 2f(| \psi |^2)\psi ),
\]
which is the right-hand side of \eqref{eq:nls}.

\vspace{2 mm}

Now consider the equation \eqref{eq:qhd}, which describes the motion of a compressible isentropic-type fluid. The Poisson geometry of such fluids was studied in \cite{MarsRaWein84}, and we outline the results below.

Consider the semidirect product group $S=\mathrm{Diff}(\rn{n})\ltimes H^\infty(\rn{n}; \rn{})$. Here and below we assume that the elements ``decay sufficiently fast at infinity." The space $H^\infty(\rn{n}; \rn{})$ is defined as $H^\infty(\rn{n}; \rn{}) := \cap_{k\geq 0} H^k(\rn{n}; \rn{})$, while $\mathrm{Diff}(\rn{n})$ is the set of diffeomorphisms on $\rn{n}$ of the form $g = \mathrm{id} + f$ with $f \in H^\infty(\rn{n}; \rn{n})$. The Lie algebra of the group $S$ is $\salg=\mathrm{vect}(\rn{n})\ltimes H^\infty(\rn{n}; \rn{})$, and the (smooth) dual of this Lie algebra is $\dsalg=\mathrm{vect}^*(\rn{n})\oplus H^{\infty*}(\rn{n}; \rn{})$. The space $\dsalg$ is the space of elements $(\mu,\rho)$, where $\rho$ is a density and $\mu=\rho v^\flat$ is a 1-form defined with respect to the Euclidean metric on $\rn{n}$. As mentioned above, we also identify $\mu$ with $\rho v$. There is a natural linear Poisson structure on the dual of any Lie algebra, the Lie-Poisson bracket. For this reason we work with the momentum $\mu$ rather than the velocity $v$. The Lie-Poisson bracket on $\dsalg$ is given by
\begin{align*}\label{eq:pbfl}
	\pbfl{F}{G}(\mu , \rho) &= \int_\rn{n} \mu \cd \left[ \left(\dgm \cd \gr \right) \dfm - \left( \dfm \cd \gr \right) \dgm \right] dx \\
	& \quad + \int_\rn{n} \rho \left[ \left( \dgm \cd \gr \right) \dfr - \left( \dfm \cd \gr \right) \dgr \right] dx .
\end{align*}
This bracket is also called the \textit{compressible fluid bracket}. The Hamiltonian for \eqref{eq:qhd}, written in terms of momentum $\mu$, is
\begin{equation*}\label{eq:hamqhd}
	H_\fl = \int_\rn{n} \frac{1}{2} \frac{| \mu |^2}{\rho} + \frac{1}{8}\frac{\left| \gr \rho \right|^2}{\rho} + U(\rho) \, dx.
\end{equation*}
We denote the associated Hamiltonian vector field on the $(\mu,\rho)$-space by $X_{H_\fl}$. One can check that it agrees with the right-hand side of \eqref{eq:qhd}: if $X^\rho_{H_\fl}$ and $X^\mu_{H_\fl}$ denote the $\rho$ and $\mu$ components of $X_{H_\fl}$, then
\begin{align*}
	X^\rho_{H_\fl} &= -\gr \cd \mu \, ,\\
	X^\mu_{H_\fl} &= -\gr \cd \left( \dfrac{1}{\rho} \mu \otimes \mu \right) - \rho \gr \left( f (\rho) - \dfrac{\Delta \left(\sqrt{\rho}\right)}{2\sqrt{\rho}} \right).
\end{align*}

\vspace{2 mm}

\end{subsection}

\begin{subsection}{Lie group and Lie algebra actions on the space of wave functions}\label{sec:groupaction}
It turns out that it is natural to think of $\Psi$ as being a space of complex valued half-densities on $\rn{n}$
since $\psi$ is square-integrable and $|\psi|^2$ is often interpreted as a probability measure.
Half-densities are characterized by how they are transformed under diffeomorphisms of the underlying space: if $\psi$ is a half-density on $\rn{n}$ and $g$ is a diffeomorphism of $\rn{n}$, the pushforward $g_*(\psi)$ of $\psi$ is $g_*(\psi) = \sqrt{|\mathrm{Det}(Dg^{-1})|}\, \psi \circ g^{-1}$. With this in mind, the following action of $S$ on $\Psi$ is natural.

\begin{defi}
	The semidirect product group $S=\mathrm{Diff}(\rn{n})\ltimes H^\infty(\rn{n})$ {\it acts on the Poisson space} $(\Psi,\pbnls{\cd}{\cd})$ as follows: if $(g,a)\in S$ is a group element, then
	\begin{equation}\label{eq:grpact}
		(g,a): \psi \mapsto (g,a) \cd \psi := \sqrt{|\mathrm{Det}(Dg^{-1})|}\, e^{-ia} (\psi \circ g^{-1}).
	\end{equation}
\end{defi}
\vspace{2 mm}
In other words, $\psi$ is pushed forward under the diffeomorphism $g$ as a complex-valued half-density, followed by a pointwise phase adjustment $e^{-ia}$.

For the above Lie group action \eqref{eq:grpact}, the Lie algebra action is as follows.
\begin{prop}
	Given an element $\xi = (v,\alpha)\in \salg = \mathrm{vect}(\rn{n})\ltimes H^\infty(\rn{n})$, the infinitesimal action of $\xi$ on $\Psi$ corresponding to the action \eqref{eq:grpact} is the vector field $\xip \in \mathfrak{X}(\Psi)$ defined at each point $\psi$ by
	\begin{equation}\label{eq:infact}
		\xip(\psi) = -\frac{1}{2} \psi \gr \cd v - i\alpha \psi - \gr \psi \cd v.
	\end{equation}
\end{prop}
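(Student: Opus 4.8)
The plan is to compute the infinitesimal generator $\xi_\Psi$ directly from its definition as the derivative of the group action along a one-parameter subgroup. Concretely, given $\xi = (v,\alpha) \in \salg$, I would choose a curve $(g_t, a_t)$ in $S$ with $(g_0, a_0) = (\mathrm{id}, 0)$ and $\frac{d}{dt}\big|_{t=0}(g_t, a_t) = (v, \alpha)$; for instance $g_t$ the flow of the vector field $v$ and $a_t = t\alpha$. Then by definition
\begin{equation*}
	\xip(\psi) = \frac{d}{dt}\Big|_{t=0} \left( (g_t, a_t) \cd \psi \right) = \frac{d}{dt}\Big|_{t=0} \left( \sqrt{|\mathrm{Det}(Dg_t^{-1})|}\, e^{-ia_t} (\psi \circ g_t^{-1}) \right),
\end{equation*}
and the computation reduces to differentiating each of the three factors at $t=0$ by the product rule.

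The three terms are handled as follows. First, $\frac{d}{dt}\big|_{t=0} e^{-ia_t}\psi = -i\alpha\psi$, giving the middle term of \eqref{eq:infact}. Second, $\frac{d}{dt}\big|_{t=0}(\psi \circ g_t^{-1}) = -\gr\psi \cd v$, since the flow of $g_t^{-1}$ at time $0$ has velocity $-v$; this is the third term. Third, and this is the only step requiring a small Jacobian computation, one needs $\frac{d}{dt}\big|_{t=0}\sqrt{|\mathrm{Det}(Dg_t^{-1})|}$. Using the Liouville-type formula $\frac{d}{dt}\mathrm{Det}(Dg_t) = (\gr\cd v)\circ g_t \cdot \mathrm{Det}(Dg_t)$ (or equivalently that the time derivative of the Jacobian of a flow is its divergence), together with $\mathrm{Det}(Dg_t^{-1}) = 1/\mathrm{Det}(Dg_t)$ and the chain rule on the square root, one obtains $\frac{d}{dt}\big|_{t=0}\sqrt{|\mathrm{Det}(Dg_t^{-1})|} = -\tfrac{1}{2}\gr\cd v$, which multiplied by $\psi$ yields the first term. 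Summing the three contributions gives precisely \eqref{eq:infact}.

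The main obstacle, such as it is, is bookkeeping the sign and the factor of $\tfrac{1}{2}$ in the density-weight term, and being careful that the relevant derivative is of $g_t^{-1}$ rather than $g_t$ (hence the minus signs in the second and third terms). One should also note that this is an infinitesimal computation performed pointwise in $x$, so no functional-analytic subtleties about the topology on $\Psi$ or $S$ intervene; smoothness of the action in the $H^\infty$ setting is what legitimizes interchanging $\frac{d}{dt}$ with evaluation, and this is standard for the groups at hand. A brief remark that the result is independent of the choice of curve $(g_t, a_t)$ representing $\xi$ (as it must be) completes the argument.
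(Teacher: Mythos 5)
Your proposal is correct and is exactly the "direct computation" the paper invokes: differentiating the action along a one-parameter subgroup, with the Liouville formula supplying the $-\tfrac{1}{2}\psi\,\gr\cd v$ term and the inverse flow supplying the signs on $-i\alpha\psi$ and $-\gr\psi\cd v$. No issues.
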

\begin{proof}
	The proof is a direct computation.
\end{proof}

In the next section we define the Madelung transform and show that it is a momentum map associated with the action \eqref{eq:infact}.

\end{subsection}

\end{section}

\begin{section}{The Madelung transform and a geometric interpretation}

\begin{subsection}{The Madelung transform}

The classical Madelung transform is a map $(\tau,\rho) \mapsto \psi = \sqrt{\rho} e^{i\tau}$ defined for positive $\rho$. We consider the inverse map as a more fundamental object and define it below.

\begin{defi}
	The (inverse) {\it Madelung transform} is the map $\mad : \Psi \to \dsalg$ defined by
	\begin{equation}\label{eq:mad}
		\mad(\psi) := \left( \begin{array}{c}
						\im \, \psib \gr \psi  \notag \\ \re \, \psib \psi \end{array} \right)
					= \left( \begin{array}{c}
						\mu \notag \\ \rho \end{array} \right).
	\end{equation}
\end{defi}

\begin{prop}
	The map $\mad$ is the inverse of the classical Madelung transform in the sense that if $\psi = \sqrt{\rho}e^{i\tau}$, then $\mad(\psi) = (\rho \gr \tau , \rho)$. If $\rho$ is positive, then $(\rho \gr \tau , \rho)$ can be identified with $([\tau],\rho)$, where $[\cd]$ is the equivalence class of functions on $\rn{n}$ modulo an additive constant.
\end{prop}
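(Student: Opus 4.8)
The plan is to substitute $\psi = \sqrt{\rho}\,e^{i\tau}$ directly into the defining formula \eqref{eq:mad} for $\mad$ and simplify. First I would record the two elementary identities
\[
	\psib\psi = \rho, \qquad \gr\psi = e^{i\tau}\left( \gr\sqrt{\rho} + i\sqrt{\rho}\,\gr\tau \right),
\]
the second being the product rule applied to the factors $\sqrt{\rho}$ and $e^{i\tau}$. Multiplying the second identity by $\psib = \sqrt{\rho}\,e^{-i\tau}$ cancels the phase, giving $\psib\gr\psi = \sqrt{\rho}\,\gr\sqrt{\rho} + i\rho\,\gr\tau = \tfrac12\gr\rho + i\rho\,\gr\tau$. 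Since $\tfrac12\gr\rho$ is real, taking imaginary parts gives $\im\,\psib\gr\psi = \rho\,\gr\tau$, while clearly $\re\,\psib\psi = \rho$; this is exactly $\mad(\psi) = (\rho\gr\tau,\rho)$.

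For the second assertion I would show that, on the locus where $\rho>0$, the data $(\rho\gr\tau,\rho)$ and $([\tau],\rho)$ determine one another. Given $(\rho\gr\tau,\rho)$ with $\rho$ positive, dividing the first component by $\rho$ recovers the exact $1$-form $\gr\tau$, and since $\rn{n}$ is connected this $1$-form has a primitive unique up to an additive constant, so it recovers the class $[\tau]$; conversely $[\tau]$ determines $\gr\tau$, hence $\rho\gr\tau$ once $\rho$ is known. Thus $(\rho\gr\tau,\rho)\mapsto([\tau],\rho)$ is a bijection on $\{\rho>0\}$, and under this identification the first part says precisely that $\mad$ inverts the classical Madelung transform $([\tau],\rho)\mapsto\sqrt{\rho}\,e^{i\tau}$.

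I do not expect any genuine obstacle: the statement is a direct verification. The only points worth a remark are that the representation $\psi=\sqrt{\rho}\,e^{i\tau}$ requires $\psi$ to be nowhere vanishing — which on the simply connected space $\rn{n}$ is exactly what guarantees that a real-valued phase $\tau$ exists, unique up to an additive constant (in particular up to $2\pi\mathbb{Z}$), so that $[\tau]$ is well defined — and that the recovery of $[\tau]$ from $\gr\tau$ uses connectedness of $\rn{n}$. Neither point affects the computation of $\mad(\psi)$ itself, which is valid for every $\psi$ regardless of vanishing.
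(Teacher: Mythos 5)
Your proof is correct and follows essentially the same route as the paper: direct substitution of $\psi=\sqrt{\rho}\,e^{i\tau}$ into the defining formula, the product rule to get $\psib\gr\psi=\tfrac12\gr\rho+i\rho\gr\tau$, and recovery of $[\tau]$ by dividing by $\rho$ and integrating. The extra remarks on connectedness and the existence of a single-valued phase are sensible but not needed beyond what the paper records.
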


\begin{proof}
	By the definition of $\mad$,
	\begin{align*}
		\mad(\sqrt{\rho}e^{i\tau}) &= \left( \begin{array}{c}
						\im \, \sqrt{\rho}e^{-i\tau} \gr \left( \sqrt{\rho}e^{i\tau} \right) \\ \re \, \sqrt{\rho}e^{-i\tau} \sqrt{\rho}e^{i\tau} \end{array} \right) \\
					&= \left( \begin{array}{c}
						\im \, \sqrt{\rho}e^{-i\tau} \left( \tfrac{1}{2} \rho^{-\frac{1}{2}} e^{i\tau} \gr \rho + i \sqrt{\rho} e^{i\tau} \gr \tau \right) \\ \rho \end{array} \right) = \left( \begin{array}{c}
						\rho \gr \tau \\ \rho \end{array} \right).
	\end{align*}
	If $\rho$ is positive, one can recover $[\tau]$ from $\rho \gr \tau$ by dividing by $\rho$ and integrating.
\end{proof}

\begin{rem}
	Note that the equivalence of functions $\tau$ and $\tau'$ differing by an additive constant corresponds to the physical equivalence of two wave functions $\psi$ and $\psi'$ differing by a constant phase factor.
\end{rem}

\begin{prop}
	{\rm (cf. eg. \cite{Madelung1927})} The (inverse) Madelung transform $\mad$ sends \eqref{eq:nls} to \eqref{eq:qhd}:
	 $d\mad (X_{H_\nls}) = X_{H_\fl}$. 
\end{prop}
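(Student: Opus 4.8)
The plan is to check the identity $d_\psi \mad\,(X_{H_\nls}(\psi)) = X_{H_\fl}(\mad(\psi))$ pointwise, by differentiating the two components $\rho = \re\,\psib\psi$ and $\mu = \im\,\psib\gr\psi$ of $\mad(\psi)$ along a curve $\psi(t)$ solving \eqref{eq:nls} and matching the outcome with the right-hand side of \eqref{eq:qhd}. (One may instead substitute the polar form $\psi=\sqrt{\rho}\,e^{i\tau}$, which is legitimate since $\psi$ is assumed nonvanishing, directly recover \eqref{eq:qhd1}, and then invoke the equivalence of \eqref{eq:qhd1} and \eqref{eq:qhd} recorded above; below I describe the more direct computational route.)

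For the density, $\dt\rho = 2\re(\psib\,\dt\psi)$; inserting $\dt\psi = \tfrac{i}{2}(\Delta\psi - 2f(|\psi|^2)\psi)$, the nonlinear term is real and drops out of $\re$, so $\dt\rho = -\im(\psib\Delta\psi)$. Since $\gr\cd(\psib\gr\psi) = |\gr\psi|^2 + \psib\Delta\psi$ with $|\gr\psi|^2$ real, this equals $-\gr\cd\im(\psib\gr\psi) = -\gr\cd\mu$, the first equation of \eqref{eq:qhd}. For the momentum, $\dt\mu = \im(\dt\psib\,\gr\psi) + \im(\psib\,\gr\dt\psi)$; substituting \eqref{eq:nls} and its conjugate, I split the result into a nonlinear part (the terms containing $f$) and a kinetic part (the terms containing the Laplacian). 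In the nonlinear part the two contributions $\pm f(|\psi|^2)\re(\psib\gr\psi)$ cancel, and only $-|\psi|^2\,\gr f(|\psi|^2) = -\rho\,\gr f(\rho)$ survives, which is precisely the barotropic-pressure term of \eqref{eq:qhd}.

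The kinetic part equals $\tfrac12\re(\psib\,\gr\Delta\psi - \Delta\psib\,\gr\psi)$, and the computation reduces to two algebraic identities. First, differentiating $|\gr\psi|^2$ and $\rho = \psib\psi$ and again using that $|\gr\psi|^2$ is real, one rewrites this expression, component by component, as $-\sum_k\partial_k\!\big(\re(\partial_j\psib\,\partial_k\psi) - \tfrac14\partial_j\partial_k\rho\big)$ --- the standard conservation law for the Schr\"{o}dinger momentum density with its quantum stress tensor. Second, since $\rho > 0$ one has the pointwise identity $\re(\partial_j\psib\,\partial_k\psi) = \tfrac{1}{\rho}\mu_j\mu_k + \tfrac{1}{4\rho}\partial_j\rho\,\partial_k\rho$ (immediate in polar coordinates), so the stress tensor becomes $\tfrac1\rho\mu_j\mu_k + P_{jk}$ with $P_{jk} = \tfrac14(\tfrac1\rho\partial_j\rho\,\partial_k\rho - \partial_j\partial_k\rho)$; it then remains to verify the scalar identity $-\sum_k\partial_k P_{jk} = \rho\,\partial_j\!\big(\tfrac{\Delta\sqrt{\rho}}{2\sqrt{\rho}}\big)$, i.e.\ that the divergence of $P$ is the quantum-pressure force. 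Collecting the pieces yields exactly the second equation of \eqref{eq:qhd}, and together with the density computation this gives $d\mad(X_{H_\nls}) = X_{H_\fl}$.

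The only genuinely non-mechanical step is this last identity $-\sum_k\partial_k P_{jk} = \rho\,\partial_j(\Delta\sqrt{\rho}/2\sqrt{\rho})$ --- equivalently, the translation of the quantum pressure from the velocity form \eqref{eq:qhd1} to the momentum form \eqref{eq:qhd}. It follows from $\tfrac{\Delta\sqrt{\rho}}{2\sqrt{\rho}} = \tfrac14\tfrac{\Delta\rho}{\rho} - \tfrac18\tfrac{|\gr\rho|^2}{\rho^2}$ and a short application of the Leibniz rule. Everything else is bookkeeping with the product rule and the reality of $|\gr\psi|^2$, together with the observation that $\rho$ and $\mu$ are real so that $\re$ and $\im$ commute with differentiation in the obvious way.
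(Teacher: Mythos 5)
Your argument is correct, and all of its intermediate identities check out (I verified in particular that $\tfrac12\re(\psib\,\gr\Delta\psi-\Delta\psib\,\gr\psi)$ equals $-\sum_k\partial_k\bigl(\re(\partial_j\psib\,\partial_k\psi)-\tfrac14\partial_j\partial_k\rho\bigr)$, and that $-\gr\cd P=\rho\,\gr\bigl(\Delta\sqrt{\rho}/(2\sqrt{\rho})\bigr)$). It does, however, take a genuinely different route from the paper. The paper substitutes the polar form $\psi=\sqrt{\rho}\,e^{i\tau}$ into $X_{H_\nls}$ at the very start and carries out the entire pushforward computation in the variables $(\rho,\tau)$, regrouping the resulting terms by hand into $-\gr\cd\mu$ and $-\gr\cd(\tfrac1\rho\mu\otimes\mu)-\rho\gr(f(\rho)-\Delta\sqrt{\rho}/(2\sqrt{\rho}))$. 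You instead stay in the complex variables $\psi,\psib$ throughout, organize the $\mu$-equation as the local conservation law for the Schr\"odinger momentum density with quantum stress tensor $T_{jk}=\re(\partial_j\psib\,\partial_k\psi)-\tfrac14\partial_j\partial_k\rho$, and invoke polar coordinates only once, at the pointwise splitting $T_{jk}=\tfrac1\rho\mu_j\mu_k+P_{jk}$. What your version buys: the density equation needs no positivity assumption at all, the single place where $\rho>0$ is actually used is isolated and explicit, and the appearance of the quantum pressure is explained structurally (as the divergence of a stress tensor depending only on $\rho$) rather than emerging from term-by-term cancellation. What the paper's version buys is uniformity: one mechanical substitution handles both components at once, with no need to recognize the conservation-law structure. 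The final nontrivial identity --- rewriting the quantum pressure via $\Delta\sqrt{\rho}/(2\sqrt{\rho})=\tfrac14\Delta\rho/\rho-\tfrac18|\gr\rho|^2/\rho^2$ --- is common to both arguments.
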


\begin{proof}
	The pushforward $d\mad_\psi (\phi)$ of a tangent vector $\phi \in T_\psi \Psi$ by the map $\mad$ is given by
	\begin{align*}
		d\mad_\psi (\phi) &= \left(\begin{array}{l} \im (\psib \gr \phi + \phib \gr \psi) \\ 2\re (\psib \phi) \end{array}\right) 
			=: \left(\begin{array}{l} \left(d\mad_\psi (\phi) \right)^\mu \\ \left(d\mad_\psi (\phi) \right)^\rho \end{array} \right).
	\end{align*}

	Recall that $X_{H_\nls} = \frac{i}{2}(\Delta \psi - 2f(| \psi |^2)\psi)$. Now substitute $\psi = \sqrt{\rho} e^{i\tau}$ into the expression for $X_{H_\nls}$, which results in
	\begin{align} \label{eq:pushcomp1}
		X_{H_\nls} &= \frac{i}{2} \left(\Delta \left(\sqrt{\rho} e^{i\tau} \right) -2f(\rho)\sqrt{\rho} e^{i\tau} \right) \notag \\
			&= \frac{i}{2} \left( -\frac{|\gr \rho|^2}{4\rho^{\frac{3}{2}}} e^{i\tau} + \frac{\Delta \rho}{2\sqrt{\rho}} e^{i\tau} - \sqrt{\rho} |\gr \tau|^2 e^{i\tau} - 2f(\rho)\sqrt{\rho}e^{i\tau} \right) \notag \\
			&\qquad - \frac{1}{2} \left( \frac{\gr \rho \cd \gr \tau}{\sqrt{\rho}} e^{i \tau} + \sqrt{\rho} \Delta \tau e^{i\tau} \right) .
	\end{align}

	The $\rho$-component of the image $d\mad_\psi (X_{H_\nls})$ is obtained by multiplying \eqref{eq:pushcomp1} by $2\psib = 2\sqrt{\rho}e^{-i\tau}$ and then taking the real part. We have 
	\begin{align*}
		\left(d\mad_\psi (X_{H_\nls}) \right)^\rho &= 2\re (\psib X_{H_\nls}) = -\left(\gr \rho \cd \gr \tau + \rho \Delta \tau \right) \\
			&= -\gr \cd \left( \rho \gr \tau \right) = -\gr \cd \mu = X_{H_\fl}^\rho \, ,
	\end{align*}
	which is the right-hand side of the continuity equation in \eqref{eq:qhd}.

	The $\mu$-component of $d\mad_\psi (X_{H_\nls})$ is found in a similar fashion. Namely, after straightforward computations, one obtains
	\begin{align*}
		\left(d\mad_\psi (X_{H_\nls}) \right)^\mu &= \im \left( \psib \gr X_{H_\nls} + \bar{X}_{H_\nls} \gr \psi \right) \\
			&= \frac{|\gr \rho|^2 \gr \rho}{4\rho^2} - \frac{\gr |\gr \rho|^2}{8\rho} - \frac{\Delta \rho \gr \rho}{4\rho} + \frac{\gr \Delta \rho}{4} \\
			&\qquad - \frac{\rho\gr|\gr \tau|^2}{2} - \gr \tau \gr \cd (\rho \gr \tau) - \rho \gr f(\rho) .
	\end{align*}
	Direct computation shows that the first line of the right-hand side of the last equality is equal to $\rho \gr \left( \frac{\Delta \sqrt{\rho}}{2\sqrt{\rho}}\right)$. The terms involving $\tau$ simplify to
	\begin{align*}
		- \frac{\rho\gr|\gr \tau|^2}{2} - \gr \tau \gr \cd (\rho \gr \tau) &= -\left[ (\rho \gr \tau \cd \gr) \gr \tau + \gr \tau \gr \cd (\rho \gr \tau) \right] \\ 
			&= -\left[(\mu \cd \gr)\left(\frac{\mu}{\rho}\right) + \frac{\mu}{\rho} \gr \cd \mu \right] 
			= - \gr \cd \left( \frac{1}{\rho} \mu \otimes \mu \right),
	\end{align*}
	so that 
	\[
		\left(d\mad_\psi (X_{H_\nls}) \right)^\mu = - \gr \cd \left( \frac{1}{\rho} \mu \otimes \mu \right) - \rho \gr \left( f(\rho) - \frac{\Delta \sqrt{\rho}}{2\sqrt{\rho}} \right) = X_{H_\fl}^\mu \, .
	\]	
\end{proof}

\end{subsection}

\begin{subsection}{The Madelung transform as a momentum map}\label{sec:madelung}
We first recall the definition of a momentum map.

Suppose we are given a Poisson manifold $P$, a Lie algebra $\alg$, and an action $A:\alg \to \mathfrak{X}(P)$, $A(\xi)=\xi_P$. Let $\lla , \rra$ denote the pairing of $\alg$ and $\dalg$. The Lie algebra action $A$ \textit{admits a momentum map} if there exists a map $\bfj :P \to \dalg$ satisfying the following definition:

\begin{defi}\label{def:mommap}
	A \textit{momentum map} associated with a Lie algebra action $A(\xi)=\xi_P$ is a map $\bfj :P \to \dalg$ such that for every $\xi \in \alg$ the function $J(\xi) : P \to \rn{}$ defined by $J(\xi)(p) := \lla \bfj (p), \xi \rra$ satisfies
	\begin{equation*}\label{mommapdef}
		X_{J(\xi)} = \xi_P.
	\end{equation*}
\end{defi}
\vspace{2 mm}
Thus Lie algebra actions that admit momentum maps are Hamiltonian actions on $P$, and the pairing of the momentum map at a point with an element $\xi \in \alg$ returns a Hamiltonian function associated with the Hamiltonian vector field $\xi_P$. We now show that $\mad$ is a momentum map associated with the action \eqref{eq:infact}. In the proofs below we will use the notation of Definition~\ref{def:mommap} and define $M(\xi) : \Psi \to \rn{}$ by $M(\xi)(\psi) := \lla \mad (\psi) , \xi \rra$.

\begin{thm}\label{thm:madmoment}
	For the Lie algebra $\salg = \mathrm{vect}(\rn{n})\ltimes H^\infty(\rn{n})$, its action \eqref{eq:infact} on the Poisson space $\Psi = H^\infty(\rn{n};\mathbb{C})$ equipped with the Poisson structure $\pbnls{F}{G}(\psi) = \left \la \gr F ,-\tfrac{i}{2} \gr G \right \ra$ admits a momentum map. The map $\mad : \Psi \to \dsalg$ defined by \eqref{eq:mad} is a momentum map associated with this Lie algebra action.
\end{thm}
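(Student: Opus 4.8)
The plan is to verify Definition~\ref{def:mommap} directly: for an arbitrary $\xi = (v,\alpha) \in \salg$, I would compute the Hamiltonian vector field $X_{M(\xi)}$ of the function $M(\xi)(\psi) = \lla \mad(\psi), (v,\alpha) \rra$ with respect to the bracket $\pbnls{\cd}{\cd}$, and show it equals the infinitesimal action $\xip$ given by \eqref{eq:infact}. First I would write out the pairing explicitly: using $\mad(\psi) = (\im\,\psib\gr\psi,\ \re\,\psib\psi)$ and the pairing of $\dsalg$ with $\salg$, we have $M(\xi)(\psi) = \int \big(\im\,\psib\gr\psi\big)\cd v\, dx + \int \big(\re\,\psib\psi\big)\,\alpha\, dx$. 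Next I would compute the differential $dM(\xi)_\psi(\phi)$ by linearizing in $\phi$ (using the pushforward formula $d\mad_\psi(\phi) = (\im(\psib\gr\phi + \phib\gr\psi),\ 2\re(\psib\phi))$ already recorded in the excerpt), obtaining $dM(\xi)_\psi(\phi) = \la \gr M(\xi), \phi\ra$ after integration by parts, which identifies the $L^2$-gradient $\gr M(\xi) \in \Psi$.

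The key computation is this gradient. Pairing against a test vector $\phi$ and moving derivatives off $\phi$ via integration by parts, the term $\int \im(\psib\gr\phi)\cd v\,dx$ contributes (schematically) $-\gr\psi\cd v - \tfrac12\psi\,\gr\cd v$ to $\gr M(\xi)$ after accounting for the factor $i$ coming from $\im(\cd) = \re(-i\,\cd)$ and the definition $\la f,g\ra = \re\int\bar f g$; the term $\int \im(\phib\gr\psi)\cd v\,dx$ contributes another $-\gr\psi\cd v$-type piece; and the phase term $\int \re(\psib\psi)\alpha\,dx$ linearizes to $2\re\int\psib\phi\,\alpha\,dx$, giving $2\alpha\psi$ — but with the factor $i$ in the bracket this becomes the $-i\alpha\psi$ term. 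After collecting, one should find $\gr M(\xi) = i\,\xip(\psi)$ (up to the precise constants), so that $X_{M(\xi)} = -\tfrac{i}{2}\gr M(\xi)$ reproduces exactly $\xip(\psi) = -\tfrac12\psi\gr\cd v - i\alpha\psi - \gr\psi\cd v$.

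The main obstacle is bookkeeping rather than conceptual: keeping careful track of the complex factor $i$ and the real-part conventions in $\la\cd,\cd\ra$, and correctly handling the integration by parts so that $\im$ and $\re$ interchange appropriately (note $\im(\psib\gr\phi) = -\re(i\psib\gr\phi)$, and integrating by parts on $\gr\phi$ introduces both $\gr\psib$ and $\gr\cd v$ terms). I would also need to check that $M(\xi)$ lies in the admissible class $\Apsi$ on which the bracket is defined — since it is an integral of a polynomial expression in $\psi, \psib$ and their first derivatives, this follows from the remarks in Section~\ref{sec:hamstruc}. A brief concluding remark could note that because the action \eqref{eq:grpact} is by a genuine group action and $\mad$ is manifestly equivariant (the half-density factor and phase factor in \eqref{eq:grpact} match the coadjoint action on $\dsalg$), the momentum map is in fact $\mathrm{Ad}^*$-equivariant, hence Poisson, recovering the claim that $\mad$ intertwines $\pbnls{\cd}{\cd}$ with $\pbfl{\cd}{\cd}$; but strictly only the identity $X_{M(\xi)} = \xip$ is needed for the theorem as stated.
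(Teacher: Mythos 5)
Your proposal follows essentially the same route as the paper's proof: write $M(\xi)(\psi)=\re\int \psib\psi\,\alpha - i\,\psib\gr\psi\cd v\,dx$, linearize in a test direction $\phi$, integrate by parts to identify $\gr M(\xi)=2\psi\alpha-2i\gr\psi\cd v-i\psi\gr\cd v$, and multiply by $-\tfrac{i}{2}$ to recover $\xip$. Your ``schematic'' constants do work out exactly as you predict ($\gr M(\xi)=2i\,\xip(\psi)$), so the argument is correct and matches the paper's.
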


\begin{proof}
	The Hamiltonian vector field of $M(\xi)$ is given by
	\[
		X_{M(\xi)} = -\tfrac{i}{2} \gr M(\xi),
	\]
	where the gradient is defined with respect to the inner product $\la f , g \ra = \re \int \bar{f} g \, dx $. 
Let $\xi = (v,\alpha)$ be an element of $\salg = \mathrm{vect}(\rn{n})\ltimes H^\infty(\rn{n})$ and its pairing with
$(\mu, \rho)\in \dsalg$ is given by $\lla (v,\alpha), (\mu, \rho)\rra:=\int_{\rn{n}} \rho \cd \alpha + \mu \cd v \, dx$.
		 We have
	\begin{align*}
		M(\xi)(\psi) &= \int_{\rn{n}} \mad (\psi)^\rho\cdot \alpha + \mad (\psi)^\mu \cd v \, dx \\
			&= \re \int_{\rn{n}} \psib \psi \alpha - i \psib \gr \psi \cd v \, dx \ .
	\end{align*}
	To find the gradient, or variational derivative, let $\phi \in C_c^\infty(\rn{n};\mathbb{C})$ be a test function and consider the variation of $\psi$ in the direction $\phi$:
	\begin{align*}
		\left. \frac{d}{d\ep} M(\xi)(\psi + \ep \phi)\right|_{\ep = 0} &= \re \int_{\rn{n}} \psib \phi \alpha + \phib \psi \alpha - i \phib \gr \psi \cd v - i\psib \gr \phi \cd v \, dx \\
			&= \re \int_{\rn{n}} 2 \phib \psi \alpha - i \phib \gr \psi \cd v + i \phi \gr \cd (\psib v) \, dx \\
			&= \re \int_{\rn{n}} \phib\left[ 2 \psi \alpha - 2 i \gr \psi \cd v - i \psi \gr \cd v \right] \, dx\, ,
	\end{align*}
	so that $\gr M(\xi)(\psi) = 2 \psi \alpha - 2i \gr \psi \cd v - i \psi \gr \cd v$. Finally we conclude that
	\begin{align*}
		X_{M(\xi)}(\psi) = -i\alpha \psi - \gr \psi \cd v - \frac{1}{2} \psi \gr \cd v.
	\end{align*}
	Comparing this with \eqref{eq:infact}, one obtains that $X_{M(\xi)}(\psi) = \xip(\psi)$.
\end{proof}

Any momentum map $\bfj: P \to \dalg$ is also a Poisson map taking the bracket on $P$ to the Lie-Poisson bracket on $\dalg$ provided that it is \textit{infinitesimally equivariant} (see, for example, \cite[Thm.~12.4.1]{Marsden2002}). Recall that a momentum map $\bfj$ of a Lie algebra $\alg$ is infinitesimally equivariant if for all $\xi,\,\eta \in \alg$ the following holds:
\begin{equation}\label{eq:infeq}
	J([\xi , \eta ]) = \{ J(\xi) , J(\eta)\}.
\end{equation}

\begin{thm}\label{thm:infeq}
	The map $\mad : \Psi \to \dsalg$ is infinitesimally equivariant for the action of the semidirect product Lie algebra $\salg$.
\end{thm}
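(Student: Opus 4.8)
The plan is to verify the infinitesimal equivariance condition \eqref{eq:infeq} directly, by computing both sides and showing they agree. First I would recall that, since $\mad$ is a momentum map by Theorem~\ref{thm:madmoment}, the Hamiltonian vector field $X_{M(\xi)}$ equals $\xip$, and hence by the definition of the Poisson bracket on $\Psi$ we have $\{M(\xi), M(\eta)\}(\psi) = X_{M(\eta)} M(\xi)|_\psi = dM(\xi)_\psi(\eta_\Psi(\psi))$. So the right-hand side of \eqref{eq:infeq} is the derivative of $M(\xi)$ along the infinitesimal action of $\eta$, which can be computed using formula \eqref{eq:infact} for $\eta_\Psi$ together with the explicit expression $M(\xi)(\psi) = \re\int \psib\psi\alpha - i\psib\gr\psi\cd v\,dx$ derived in the proof of Theorem~\ref{thm:madmoment}.

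The left-hand side requires the bracket on $\salg = \mathrm{vect}(\rn{n})\ltimes H^\infty(\rn{n})$. Writing $\xi = (v,\alpha)$ and $\eta = (w,\beta)$, the semidirect-product bracket is $[\xi,\eta] = \big([v,w],\, \mathcal{L}_v\beta - \mathcal{L}_w\alpha\big) = \big(v\cd\gr w - w\cd\gr v,\; v\cd\gr\beta - w\cd\gr\alpha\big)$, where the action of $\mathrm{vect}(\rn{n})$ on $H^\infty(\rn{n})$ is by Lie derivative (directional derivative) of functions. Then $J([\xi,\eta]) = M([\xi,\eta])$ is obtained by pairing $\mad(\psi) = (\mu,\rho)$ with this element: $M([\xi,\eta])(\psi) = \int \rho\,(v\cd\gr\beta - w\cd\gr\alpha) + \mu\cd(v\cd\gr w - w\cd\gr v)\,dx$. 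I would then substitute $\rho = \re\,\psib\psi = |\psi|^2$ and $\mu = \im\,\psib\gr\psi$ and simplify, using integration by parts to move derivatives off $\beta$, $\alpha$, $w$, $v$.

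The main step is to match the two sides after these substitutions. I expect the computation to split naturally into pieces according to which of the data $(\alpha,\beta,v,w)$ appear: the purely ``phase'' terms (those with $\alpha$ and $\beta$ but no vector fields) should vanish because $H^\infty(\rn{n})$ is abelian, and indeed $-i\alpha\psi$ contributes only through its pairing with $\rho$, whose variation along $-i\beta\psi$ is easily seen to be symmetric and to cancel; the mixed terms (one function, one vector field) should reproduce the Lie-derivative terms $\rho(v\cd\gr\beta - w\cd\gr\alpha)$; and the purely ``diffeomorphism'' terms (both vector fields, no $\alpha,\beta$) should reproduce $\mu\cd[v,w]$. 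Each block is a routine but slightly lengthy integration-by-parts identity.

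The hard part will be organizing the diffeomorphism-only terms so that the $|\gr\rho|$-type ``quantum'' contributions hidden in $\gr\psi$ cancel and only the $\mu\cd[v,w]$ piece survives. Concretely, when I differentiate $M(\xi)(\psi) = -\re\int i\psib\gr\psi\cd v\,dx$ along $\eta_\Psi(\psi) = -\tfrac12\psi\gr\cd w - \gr\psi\cd w$ (dropping the phase part), I get terms quadratic in $\psi$ and its first derivatives contracted with $v$, $w$, and $\gr\cd w$; the claim is that the antisymmetrization in $\xi\leftrightarrow\eta$ implicit in \eqref{eq:infeq} — i.e. forming $\{M(\xi),M(\eta)\} = dM(\xi)(\eta_\Psi)$ and recognizing it already as an antisymmetric expression — collapses everything to $\int (\im\,\psib\gr\psi)\cd(v\cd\gr w - w\cd\gr v)\,dx$. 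I would verify this by carefully integrating by parts to expose the common factor $\im\,\psib\gr\psi = \mu$; the symmetric-in-$\psi$ remainders should either vanish identically or integrate to zero.
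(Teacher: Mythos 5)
Your overall strategy is the same as the paper's: both sides of \eqref{eq:infeq} are computed directly, and your observation that $\pbnls{M(\xi)}{M(\eta)} = dM(\xi)(\eta_\Psi)$ is exactly the computation $\la \gr M(\xi), -\tfrac{i}{2}\gr M(\eta)\ra$ carried out in the paper, since $-\tfrac{i}{2}\gr M(\eta) = X_{M(\eta)} = \eta_\Psi$ by Theorem~\ref{thm:madmoment}. Your block decomposition (phase--phase, mixed, vector-field--vector-field) is also how the computation organizes itself, and your anticipated cancellation of the quadratic-in-$\gr\psi$ ``quantum'' terms in the last block is correct: after integration by parts they are absorbed by the antisymmetric part of $\gr\mu$, where $\mu = \im\,\psib\gr\psi$.

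The one step that would fail as written is your formula for the bracket on $\salg$. You take $[(v,\alpha),(w,\beta)] = \bigl([v,w],\, v\cd\gr\beta - w\cd\gr\alpha\bigr)$ with $[v,w]=v\cd\gr w - w\cd\gr v$, i.e.\ the standard left semidirect-product bracket with $\mathrm{vect}(\rn{n})$ acting on $H^\infty(\rn{n})$ by $+\mathcal{L}$. If you carry out your own computation of $dM(\xi)(\eta_\Psi)$ you will find that the mixed block equals $\int\rho\,(w\cd\gr\alpha - v\cd\gr\beta)\,dx$ --- for instance the $\alpha$--$w$ terms are $\re\int 2\psib\alpha\,\bigl(-\gr\psi\cd w - \tfrac12\psi\,\gr\cd w\bigr)\,dx = -\int\alpha\,\gr\cd(\rho w)\,dx = \int\rho\, w\cd\gr\alpha\,dx$ --- and similarly the vector-field block equals $\int\mu\cd(w\cd\gr v - v\cd\gr w)\,dx$. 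Both blocks are the negatives of what your bracket predicts, so with your convention you would prove $M([\xi,\eta]) = -\pbnls{M(\xi)}{M(\eta)}$, i.e.\ anti-equivariance. The bracket for which \eqref{eq:infeq} holds is the one stated in \eqref{eq:algbrack}, which is opposite to yours; this sign is forced by the group action \eqref{eq:grpact}, in which both $\psi$ and the phase function are pushed forward (composed with $g^{-1}$), so that the induced action of $\mathrm{vect}(\rn{n})$ on $H^\infty(\rn{n})$ is $-\mathcal{L}$ and the relevant Lie algebra structure is the hydrodynamical (right-invariant) one. This is not merely cosmetic: the same convention enters the Lie--Poisson bracket $\pbfl{\cd}{\cd}$ on $\dsalg$, so with your sign Corollary~\ref{poisson} would come out as an anti-Poisson statement. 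Fix the bracket to agree with \eqref{eq:algbrack} and the rest of your plan goes through.
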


\begin{proof}
	We start by computing the left hand side of \eqref{eq:infeq}. In our context, the Lie bracket on a pair of elements $\xi = (u,\alpha),\, \eta = (v,\beta) \in \salg = \mathrm{vect}(\rn{n})\ltimes H^\infty(\rn{n})$ is given by
	\begin{equation}\label{eq:algbrack}
		\left[(u,\alpha),(v,\beta)\right] = \left([u,v], v \cd \gr \alpha - u \cd \gr \beta \right).
	\end{equation}
	We refer to \cite{MarsRaWein84} for the general formula of the Lie bracket on a semidirect product algebra.
	Using the bracket \eqref{eq:algbrack} and the definition of $\mad$, we have
	\begin{align*}
		M([\xi , \eta ])(\psi) &= \re \int_{\rn{n}} -i\psib \gr \psi \cd [u,v] + \psib \psi \left(v \cd \gr \alpha - u \cd \gr \beta \right) dx.
	\end{align*}
	For the right-hand side of \eqref{eq:infeq} we obtain
	\begin{align}\label{eq:step1}
		&\pbnls{M(\xi)}{M(\eta)}(\psi) = \la \gr M(\xi) , \tfrac{-i}{2} \gr M(\eta) \ra (\psi) \notag \\
			& \qquad = \frac{1}{2} \re \int_{\rn{n}} \left[ 2\psib \alpha + 2i\gr \psib \cd u + i \psib \gr \cd u \right] \notag \\
			& \qquad \qquad \qquad \times \left[ -2i\psi \beta -2\gr \psi \cd v - \psi \gr \cd v \right]dx  \notag \\
			& \qquad = \re \int_{\rn{n}} \left[-2\psib \gr \psi \cd v \alpha - \psib \psi \gr \cd v \alpha + 2\psi \gr \psib \cd u \beta + \psib \psi \gr \cd u \beta \right] \notag \\
			& \qquad \qquad \qquad + \left[ -i \gr \psib \cd u \gr \psi \cd v + i \gr \psi \cd u \gr \psib \cd v - i\psi \gr \psib \cd u \gr \cd v + i\psi \gr \psib \cd v \gr \cd u \right]dx \, .
	\end{align}
	At this point we use two identities that are easily verified. The first is
	\begin{equation}\label{eq:ident1}
		-2\psib \gr \psi \cd v \alpha - \psib \psi \gr \cd v \alpha = -\gr \cd \left(\psib \psi v \alpha \right) + \psib \psi v \cd \gr \alpha + \psi \gr \psib \cd v \alpha - \psib \gr \psi \cd v \alpha.
	\end{equation}
	Notice that $\psi \gr \psib \cd v \alpha - \psib \gr \psi \cd v \alpha$ is purely imaginary, so this term will not contribute to the integral in \eqref{eq:step1}. Neither will $-\gr \cd \left(\psib \psi v \alpha \right)$ since it is an exact derivative. There is a similar identity involving $u$ and $\beta$ in the place of $v$ and $\alpha$.
	
	The second identity we use is
	\begin{align}\label{eq:ident2}
		&-i\gr \psib \cd u \gr \psi \cd v + i \gr \psi \cd u \gr \psib \cd v - i \psi \gr \psib \cd u \gr \cd v + i\psi \gr \psib \cd v \gr \cd u \notag \\
			&\qquad \qquad \qquad = -i\gr \cd (\psi \gr \psib \cd u v) + i\gr \cd (\psi \gr \psib \cd v u) \notag \\
			&\qquad \qquad \qquad \qquad + i\psi \gr \left( \gr \psib \cd u \right)\cd v - i\psi \gr \left( \gr \psib \cd v \right)\cd u \notag \\
			&\qquad \qquad \qquad = -i\gr \cd (\psi \gr \psib \cd u v) + i\gr \cd (\psi \gr \psib \cd v u) - i\psi \gr \psib \cd [u,v].
	\end{align}
	The first two terms on the right-hand side of \eqref{eq:ident2} do not contribute to \eqref{eq:step1}.
	
	So, using \eqref{eq:ident1} and \eqref{eq:ident2}, we can rewrite the Poisson bracket in \eqref{eq:step1} as
	\begin{equation}
		\pbnls{M(\xi)}{M(\eta)}(\psi) =  \re \int_{\rn{n}} \psib \psi \left[ v \cd \gr \alpha - u \cd \gr \beta \right] - i\psib \gr \psi \cd [u,v] \, dx. \notag \\
	\end{equation}
	Since the latter expression is equal to $M([\xi , \eta ])(\psi)$, this completes the proof.
	
\end{proof}

\begin{cor}\label{poisson}
	The map $\mad$ is a Poisson map sending the bracket $\pbnls{\cd}{\cd}$ to the bracket $\pbfl{\cd}{\cd}$.
\end{cor}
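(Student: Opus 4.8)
The plan is to obtain the corollary as an immediate consequence of Theorems~\ref{thm:madmoment} and \ref{thm:infeq}, combined with the general principle that an infinitesimally equivariant momentum map intertwines the two Poisson structures (see \cite[Thm.~12.4.1]{Marsden2002}). By Theorem~\ref{thm:madmoment} the map $\mad$ is a momentum map for the action \eqref{eq:infact}, and by Theorem~\ref{thm:infeq} it is infinitesimally equivariant, so the cited result applies and yields $\pbnls{F\circ\mad}{G\circ\mad} = \pbfl{F}{G}\circ\mad$ for $F,G$ in the appropriate class of functionals on $\dsalg$. With the sign conventions fixed above this requires no adjustment: the compressible fluid bracket $\pbfl{\cdot}{\cdot}$ is precisely the Lie--Poisson bracket on $\dsalg$ for which the equivariance identity \eqref{eq:infeq} is the correct normalization.

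To keep the argument self-contained I would also recall why infinitesimal equivariance suffices. The Lie--Poisson bracket $\pbfl{\cdot}{\cdot}$ is a derivation in each slot and is completely determined by its restriction to the linear functionals $\ell_\xi:(\mu,\rho)\mapsto\lla(\mu,\rho),\xi\rra$, $\xi\in\salg$, since at a given point the variational derivative $(\delta F/\delta\mu,\,\delta F/\delta\rho)$ of any $F$ is exactly the element of $\salg$ for which $F$ and $\ell_\xi$ agree to first order. Because $\ell_\xi\circ\mad = M(\xi)$ and $\ell_{[\xi,\eta]}\circ\mad = M([\xi,\eta])$, Theorem~\ref{thm:infeq} says precisely that $\mad$ is a Poisson map on every pair $(\ell_\xi,\ell_\eta)$; the chain rule $\gr(F\circ\mad)_\psi = (d\mad_\psi)^{*}\big((\delta F/\delta\mu,\,\delta F/\delta\rho)|_{\mad(\psi)}\big)$ together with the Leibniz property then propagates the identity from linear $F,G$ to arbitrary functionals, exactly as in the finite-dimensional proof.

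The one point requiring genuine care — and the step I expect to be the main obstacle — is the infinite-dimensional bookkeeping flagged in Section~\ref{sec:hamstruc}: the brackets are defined only on subclasses $\Apsi\subset C^\infty(\Psi)$ and $\Asd\subset C^\infty(\dsalg)$, so one must check that pull-back by $\mad$ maps $\Asd$ into $\Apsi$ and that $d\mad$ sends admissible variations on $\Psi$ to admissible variations on $\dsalg$, so that the chain-rule identity above is meaningful term by term. Given the explicit form $\mad(\psi)=(\im\,\psib\gr\psi,\ \re\,\psib\psi)$, which is polynomial in $\psi,\psib$ and their first derivatives, and the observation that functionals with smooth $L^2$ gradients behave well under such substitutions, this is routine; I would record it as a compatibility remark rather than carry out the estimates.
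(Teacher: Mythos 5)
Your proposal is correct and follows exactly the paper's route: the corollary is deduced immediately from Theorems~\ref{thm:madmoment} and \ref{thm:infeq} via the standard fact that an infinitesimally equivariant momentum map is Poisson \cite[Thm.~12.4.1]{Marsden2002}. The extra material you include (why equivariance on linear functionals $\ell_\xi$ propagates to general functionals, and the remark on the admissible classes $\Apsi$, $\Asd$) is a sound elaboration of that same argument rather than a different approach.
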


\begin{proof}
	This is an immediate corollary of Theorems~\ref{thm:madmoment} and \ref{thm:infeq}.
\end{proof}

One can also check the Poisson property of $\mad$ by a direct computation very similar to the proof of Theorem~\ref{thm:infeq}.

\begin{rem}
	In the language of \cite{Marsden1983}, the map $\mad$ is an example of \textit{symplectic} or \textit{Clebsch variables} for the ``gradient subspace" of the space $\dsalg$, where $(\mu, \rho)\in \dsalg$ with $\mu=\rho v^\flat$ and
	$v=\nabla\tau$. The term ``Clebsch variables'' means a Poisson map $\psi:R\to P$ from a symplectic space $R$ to a Poisson space $P$. Corollary~\ref{poisson} shows that our map $\mad$ is a Poisson map from the symplectic space $\Psi$ (with symplectic form given by $\omega(U,V)=-2\la U,iV \ra$) to the gradient subspace of the Poisson space $\dsalg$. Such a map $\psi$ is also called a \textit{symplectic realization} \cite{wein83}. 
\end{rem}

\begin{rem}
	In \cite{KhesMisMod16, VonRen12}  the Madelung transform is understood somewhat differently. It is a surjection $\sigma$ from the space $\mathcal{C}(M)$ of smooth non-vanishing complex functions on a Riemannian manifold M to the tangent bundle $T\mathcal{P}(M)$ of the space $\mathcal{P}(M)$ of probability measures over $M$. Here $T\mathcal{P}(M)$ plays the role of the phase space for potential motions of the compressible fluid. It is shown in \cite{VonRen12} that $\sigma$ is a symplectic submersion of $\mathcal{C}(M)$ equipped with the constant symplectic structure on complex functions into $T\mathcal{P}(M)$ equipped with the natural symplectic structure induced by the Wasserstein metric.
\end{rem}

\begin{rem}
	One of possible applications of the momentum map nature of the Madelung transform is its use for Hamiltonian reduction to the space of singular solutions such as vortex sheets and vortex membranes. We hope to address this issue in a future publication.
\end{rem}

\end{subsection}

\subsection*{Acknowledgements}
The author is grateful to Boris Khesin for posing the problem of studying the geometric properties of the Madelung transform.

\end{section}

\bibliography{madelung2015}
\bibliographystyle{plain}

\end{document}